\documentclass[12pt]{amsart}
\usepackage{amssymb}
\input amssym.def
\usepackage{amsmath,amsfonts,hyperref,xcolor,mathtools}
\usepackage{amscd}
\usepackage[mathscr]{eucal}
\usepackage{enumitem}
\usepackage[thinc]{esdiff}
\usepackage{orcidlink}
\usepackage{float}
\usepackage{bm}

\hypersetup{colorlinks=true,citecolor={purple},linkcolor={teal},urlcolor={violet}}

\newcommand{\N}{{\mathbb N}}

\newcommand{\Q}{{\mathbb Q}}
\newcommand{\C}{{\mathbb C}}
\newcommand{\R}{{\mathbb R}}

\newtheorem{thm}{Theorem}

\newtheorem{cor}{Corollary}

\newtheorem{rmk}{Remark}

\newcommand{\thmref}[1]{Theorem~\ref{#1}}

\newcommand{\rmkref}[1]{Remark~\ref{#1}}

\makeatletter
\@namedef{subjclassname@2020}{%
\textup{2020} Mathematics Subject Classification}
\makeatother

\begin{document}


\title[On the multiple zeta functions and their variants at identical arguments]
{A note on the multiple zeta functions and their variants at identical arguments}

\author{Pawan Singh Mehta \orcidlink{0009-0001-6717-9063}}

\address{Pawan Singh Mehta\\ \newline
Department of Mathematics, Indian Institute of Technology Delhi, 
Hauz Khas, New Delhi 110016, India.}
\email{Pawan.Singh.Mehta@maths.iitd.ac.in}

\subjclass[2020]{11M32}

\keywords{Bell polynomials, Bernoulli numbers Complete Bell polynomials, Multiple zeta functions, Multiple $t$-function, multiple zeta-star function}

\begin{abstract}
In this article, we study the multiple zeta functions (MZF) and some of its variants at identical arguments. Using the harmonic product, these functions can be expressed as polynomials in the Riemann zeta function. Firstly, we note that an explicit description of the coefficients of these polynomials can be given in terms of the values of the complete Bell polynomials. This for instance, easily leads to the complete description of the singularities of these functions. But more importantly, this enables us to establish a functional relation between MZF, Multiple $t$-functions (M$t$F) and their star variants at identical arguments.
\end{abstract}

\maketitle
\section{Introduction}
Throughout this article, we denote the set of non-negative integers by $\N$.
For an integer $r \ge 1$, the depth $r$ multiple zeta function is defined by
$$
\zeta_r(s_1,\ldots,s_r):=\sum_{n_1>\cdots>n_r>0}\frac{1}{n_1^{s_1}\cdots n_r^{s_r}},
$$
on the open set
$$
U_r:=\{(s_1,\ldots,s_r)\in \C^r : \Re(s_1+\cdots+s_i)>i \text{ for all } 1\leq i\leq r\}.
$$
We also set $\zeta_0(\varnothing):=1$, in the depth $0$ case. In depth $1$, we get back the classical Riemann zeta function $\zeta(s)$. In the last three decades, the multiple zeta functions and their special values have been studied extensively. However, their origin can be traced back to Euler. While studying the special values of the Riemann zeta function, he noticed that
\begin{align}\label{eq-Euler}
\zeta(s_1)\zeta(s_2)=\zeta_2(s_1,s_2)+\zeta_2(s_2,s_1)+\zeta(s_1+s_2),
\end{align}
for $s_1,s_2\in \C$, with $\Re(s_1),\Re(s_2)>1$.
This product formula is known as the harmonic product formula (or, the stuffle product formula) and can be easily generalised for the multiple zeta functions.

Some of the natural variants of the multiple zeta functions are the multiple zeta-star functions (MZSF), the multiple $t$-functions (M$t$F) and the multiple $t$-star functions (M$t$SF), and these functions are defined on $U_r$ by
$$
\zeta^{\star}_r(s_1,\ldots,s_r):=\sum_{n_1\geq\cdots\geq n_r\geq 1}\frac{1}{n_1^{s_1}\cdots n_r^{s_r}},
$$
$$
t_r(s_1,\ldots,s_r):=\sum_{n_1>\cdots> n_r>0}\frac{1}{(2n_1-1)^{s_1}\cdots (2n_r-1)^{s_r}},
$$
and 
$$
t^{\star}_r(s_1,\ldots,s_r):=\sum_{n_1\geq\cdots\geq n_r\geq 1}\frac{1}{(2n_1-1)^{s_1}\cdots (2n_r-1)^{s_r}},
$$
respectively, for any integer $r \ge 1$. As before, in the depth $0$ case, we set these functions to be $1$. The MZSF and M$t$SF satisfy similar harmonic product formula. For example, 
\begin{align}\label{eq-stuffle-star}
\zeta^{\star}(s_1)\zeta^{\star}(s_2)=\zeta^{\star}_2(s_1,s_2)+\zeta^{\star}_2(s_2,s_1)-\zeta^{\star}(s_1+s_2), \text{ for $\Re(s_1), \Re(s_2)>1$}.
\end{align}

The meromorphic continuation of these functions to $\C^r$ have been studied by many authors. A particular case, which is of our interest in this article, is obtained by considering these meromorphic functions at the points of the form $(s,\ldots,s)$ with $s\in \R$. We use the notation $f_r(s)$ to denote the function $f_r(s,\ldots,s)$, where $f$ is any one of $\zeta,\zeta^\star,t,t^\star$. Hoffman \cite{H}, \cite{H2} computed some special values of these functions at positive integers. For example,
\begin{align}\label{at-2}
\zeta_r(2)=\zeta_r(2,\ldots,2)=\frac{\pi^{2r}}{(2r+1)!}, \quad
t_r(2)=t_r(2,\ldots,2)=\frac{\pi^{2r}}{2^{2r}(2r)!}, 
\end{align}
and 
\begin{align}\label{at-22}
t^{\star}_r(2)=t^{\star}_r(2,\ldots,2)=\frac{\pi^{2r}}{2^{2r}(2r)!}(-1)^nE_{2n},
\end{align}
where $E_n$ is the $n$-th Eulerian number. Muneta \cite{M} also obtained some special values of the multiple zeta-star function at positive integers. A more systematic study of the multiple zeta functions and the multiple Hurwitz zeta functions at identical arguments was initiated by Kamano \cite{K}. He computed the special values of these functions at non-positive integers by using the harmonic product.



One can view $\zeta_r(s)$ and its variants in terms of the symmetric functions in infinite number of variables. To see this, for an integer $r\geq 1$, let $e_r$ denote the elementary symmetric function and $h_r$ denote the complete symmetric function in $n$ variable, defined by
$$
e_r=e_r(x_1,x_2,\ldots,x_n):=\sum_{n\geq n_1>n_2>\cdots>n_r>0}x_{n_1}x_{n_2}\cdots x_{n_r},
$$
and 
$$
h_r=h_r(x_1,x_2,\ldots,x_n):=\sum_{n\geq n_1\geq n_2\geq \cdots \geq n_r\geq 1 }x_{n_1}x_{n_2}\cdots x_{n_r}.
$$
Then for $\Re(s)>1$, we have
$$
\zeta_r(s)=\lim_{n\rightarrow\infty}e_r\left(\frac{1}{1^s},\frac{1}{2^s},\ldots,\frac{1}{n^s}\right), \quad 
\zeta^{\star}_r(s)=\lim_{n\rightarrow\infty}h_r\left(\frac{1}{1^s},\frac{1}{2^s},\ldots,\frac{1}{n^s}\right),
$$
and 
$$
t_r(s)=\lim_{n\rightarrow\infty}e_r\left(\frac{1}{1^s},\frac{1}{3^s},\ldots,\frac{1}{(2n-1)^s}\right), \quad 
t^{\star}_r(s)=\lim_{n\rightarrow\infty}h_r\left(\frac{1}{1^s},\frac{1}{3^s},\ldots,\frac{1}{(2n-1)^s}\right).
$$
For an integer $r \ge 1$, if $p_r$ denote the symmetric function of $r$-th power sum in $n$ variables, defined by  
$$
p_r=p_r(x_1,x_2,\ldots,x_n):=\sum_{i=1}^{n}x_i^r,
$$
then we have the following well-known Newton's identities:
\begin{align*}
re_r=\sum_{j=1}^{r}(-1)^{j-1}e_{r-j}p_j
\ \text{ and } \
rh_r=\sum_{j=1}^{r}h_{r-j}p_j.
\end{align*}
These identities give rise to the following recursive formulae
\begin{equation}\label{fundamental-formula}
r\zeta_r(s)=\sum_{j=1}^{r}(-1)^{j-1}\zeta_{r-j}(s)\zeta(js), \quad rt_r(s)=\sum_{j=1}^{r}(-1)^{j-1}t_{r-j}(s)t(js),
\end{equation}
and
\begin{equation}\label{fundamental-formula-star}
r\zeta^{\star}_r(s)=\sum_{j=1}^{r}\zeta^{\star}_{r-j}(s)\zeta(js), \quad rt^{\star}_r(s)=\sum_{j=1}^{r}t^{\star}_{r-j}(s)t(js).
\end{equation}
Note that these identities are also consequences of \eqref{eq-Euler} and \eqref{eq-stuffle-star}. With these recurrence formulae, we can write $\zeta_r(s)$ and its variants, in terms of the usual Riemann zeta function. For example,
\begin{equation}\label{zeta_2-zeta_3}
\zeta_2(s)=\frac{1}{2}\left\{\zeta(s)^2-\zeta(2s)\right\}, \quad \zeta_3(s)=\frac{1}{6}\left\{\zeta(s)^3-\zeta(s)\zeta(2s)+2\zeta(3s)\right\},
\end{equation}
and 
\begin{equation}\label{zeta^{star}_2-zeta^{star}_3}
\zeta^{\star}_2(s)=\frac{1}{2}\left\{\zeta(s)^2+\zeta(2s)\right\}, \quad \zeta^{\star}_3(s)=\frac{1}{6}\left\{\zeta(s)^3+3\zeta(s)\zeta(2s)+2\zeta(3s)\right\}.
\end{equation}
In \cite[eq. 2.13]{MMT}, Matsumoto, Matsusaka and Tanackov  wrote them in the following form:
\begin{align}\label{Matsumoto-zeta}
\zeta_r(s)=\sum_{\substack{{\bm{\alpha}}=(\alpha_1, \ldots,  \alpha_q)\in \N^q\\ {\bm{\beta}}=(\beta_1, \ldots,  \beta_q)\in \N^q \\ q\leq r, \ \alpha_1\beta_1+\cdots+\alpha_q\beta_q=r}}C({\bm \alpha}, {\bm \beta})\prod_{j=1}^{q}\zeta(\alpha_j s)^{\beta_j},
\end{align}
where $C({\bm \alpha}, {\bm \beta})$ are some rational numbers, which were not expressed explicitly though. Note that knowing these constants explicitly gives a complete description of the special values of $\zeta_r(s)$ in terms of the special values of $\zeta(s)$. Firstly, we note that an explicit description of these coefficients can be given in terms of the values of the complete Bell polynomials.

\begin{thm}\label{main-thm}
Let $r\geq 1$ be an integer and $s\in \R$ with $s>1$. Then
\begin{align}\label{r-fold-zeta}
\zeta_r(s)=\sum_{ \substack{c_1,c_2, \ldots,c_r \geq 0, \\c_1+2c_2+\cdots+rc_r=r}}\frac{(-1)^{r+c_1+c_2+\cdots+c_r}}{1^{c_1}c_1! \ 2^{c_2}c_2!\cdots r^{c_r}c_r!}\zeta(s)^{c_1}\zeta(2s)^{c_2}\cdots\zeta(rs)^{c_r},
\end{align}
and 
\begin{align}\label{r-fold-t}
t_r(s)=\sum_{ \substack{c_1,c_2, \ldots,c_r \geq 0, \\c_1+2c_2+\cdots+rc_r=r}}\frac{(-1)^{r+c_1+c_2+\cdots+c_r}}{1^{c_1}c_1! \ 2^{c_2}c_2!\cdots r^{c_r}c_r!}t(s)^{c_1}t(2s)^{c_2}\cdots t(rs)^{c_r}.
\end{align}
\end{thm}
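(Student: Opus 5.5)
The plan is to prove \eqref{r-fold-zeta} through a generating function for the $\zeta_r(s)$ in an auxiliary variable $x$. The identity \eqref{r-fold-t} then follows verbatim, since the only input is the recursion \eqref{fundamental-formula}, which has exactly the same shape for $t_r(s)$ with $t(js)$ in place of $\zeta(js)$. So the argument is really a formal one: a sequence $a_0=1,a_1,a_2,\ldots$ satisfying $ra_r=\sum_{j=1}^r(-1)^{j-1}a_{r-j}p_j$ is determined by the numbers $p_j$, and we must identify it explicitly.

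Fix real $s>1$. Then $\zeta(js)$ is finite for every $j\ge1$, and $|\zeta(js)|\le\zeta(s)$. Introduce the formal power series
\[
E(x)=\sum_{r\ge0}\zeta_r(s)x^r, \qquad P(x)=\sum_{j\ge1}(-1)^{j-1}\zeta(js)x^{j-1}.
\]
Multiplying \eqref{fundamental-formula} by $x^{r-1}$ and summing over $r\ge1$ gives $E'(x)=E(x)P(x)$. Together with $E(0)=\zeta_0(s)=1$, this forces
\[
E(x)=\exp\Big(\sum_{j\ge1}\frac{(-1)^{j-1}\zeta(js)}{j}x^j\Big).
\]
One can check this either by uniqueness of solutions of this linear recursion in formal power series, or by noting that both sides satisfy the same first-order ODE with the same constant term. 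Everything here is purely formal, so no convergence issues arise. Alternatively, and as a sanity check, $E(x)=\prod_{n\ge1}(1+x n^{-s})$ for small $|x|$, and taking logarithms gives the same exponent.

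Next, expand the exponential of the sum as a product of exponentials:
\[
\prod_{j=1}^\infty\exp\Big(\frac{(-1)^{j-1}\zeta(js)x^j}{j}\Big)=\prod_j\sum_{c_j\ge0}\frac{(-1)^{(j-1)c_j}\zeta(js)^{c_j}x^{jc_j}}{j^{c_j}c_j!}.
\]
Extract the coefficient of $x^r$. This restricts to tuples with $\sum jc_j=r$, and in particular only $j\le r$ contribute. The sign is $(-1)^{\sum (j-1)c_j}=(-1)^{r-\sum c_j}=(-1)^{r+\sum c_j}$, which is exactly \eqref{r-fold-zeta}. Equivalently, this coefficient is $\frac{1}{r!}B_r(y_1,\ldots,y_r)$ with $y_j=(-1)^{j-1}(j-1)!\,\zeta(js)$, which gives the complete Bell polynomial description mentioned before the theorem.

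The only step needing a little care is the passage from the recursion to the exponential formula, i.e.\ justifying $E'=EP$ with matching initial term. Since we work in $\R[[x]]$, this is routine. The same computation with $t$ gives \eqref{r-fold-t}, using $t(js)<\infty$ for $s>1$.
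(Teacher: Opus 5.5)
Your proposal is correct and follows essentially the paper's route: a generating function for $\zeta_r(s)$ satisfying the first-order formal ODE coming from \eqref{fundamental-formula}, solved as $\exp\bigl(\sum_{j\ge1}(-1)^{j-1}\zeta(js)x^j/j\bigr)$, with the same argument for $t_r(s)$. The differences are cosmetic. You keep $\sum_r \zeta_r(s)x^r$ rather than absorbing the sign $(-1)^r$ into $a_r$. You also read off the coefficient of $x^r$ directly from the product of exponentials, rather than passing through $\mathbf{Y}_r$ and Bell's explicit formula.
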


\begin{rmk}\label{rmk-MMT}\rm
 In \cite[Theorem 4.2, 4.3]{MMT}, Matsumoto, Matsusaka and Tanackov studied the singularities of $\zeta_r(s)$ and showed that $\zeta_r(s)$ has only polar singularities at $s=1/k$ of order $[r/k]$ for any $1\leq k\leq r$, where $[x]$ denotes the largest integer $ \le x$. Moreover, they also deduced the leading coefficient in the Laurent expansion of $\zeta_r(s)$ at these points. Using \eqref{r-fold-zeta}, we can immediately recover their result to show that if $r=kl+m$ with $0 \le m <k$, then 
 $
 \zeta_r(s) \sim \frac{(-1)^{(k+1)l}}{k^l l!}\zeta_m\left(\frac{1}{k}\right) (sk-1)^{-l},
 $
 around $s=1/k$. A short proof is given in Section \ref{sec-proof}.
\end{rmk}

Using the recurrence relation \eqref{fundamental-formula-star}, we also prove the following theorem for the star variants.

\begin{thm}\label{main-thm-2}
Let $r\geq 1$ be an integer and $s\in \R$ with $s>1$. Then
\begin{align}\label{r-fold-zeta-star}
\zeta^{\star}_r(s)=\sum_{ \substack{c_1,c_2, \ldots,c_r \geq 0, \\c_1+2c_2+\cdots+rc_r=r}}\frac{1}{1^{c_1}c_1! \ 2^{c_2}c_2!\cdots r^{c_r}c_r!}\zeta(s)^{c_1}\zeta(2s)^{c_2}\cdots \zeta(rs)^{c_r},
\end{align}
and 
\begin{align}\label{r-fold-t-star}
t^{\star}_r(s)=\sum_{ \substack{c_1,c_2, \ldots,c_r \geq 0, \\c_1+2c_2+\cdots+rc_r=r}}\frac{1}{1^{c_1}c_1! \ 2^{c_2}c_2!\cdots r^{c_r}c_r!}t(s)^{c_1}t(2s)^{c_2}\cdots t(rs)^{c_r}.
\end{align}
\end{thm}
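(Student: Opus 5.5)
We prove \eqref{r-fold-zeta-star} by generating functions. The $t^\star$ case \eqref{r-fold-t-star} is the same argument, because it uses only the recurrence \eqref{fundamental-formula-star}, and that recurrence has the identical shape for $t^\star$ and $t$. Fix real $s>1$. For a formal variable $x$, set
$$
Z(x):=\sum_{r\ge 0}\zeta^{\star}_r(s)x^r, \qquad P(x):=\sum_{j\ge1}\frac{\zeta(js)}{j}x^j .
$$
Since $\zeta(js)\le \zeta(s)$, the series $P$ converges for $|x|<1$. We can, however, work purely with formal power series, because the final statement compares coefficients only.

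Multiply \eqref{fundamental-formula-star} by $x^{r-1}$ and sum over $r\ge1$. The left side gives $Z'(x)$. The right side is the Cauchy product of $Z(x)$ with $\sum_{j\ge1}\zeta(js)x^{j-1}=P'(x)$. Hence $Z'=ZP'$ with $Z(0)=\zeta^\star_0(s)=1$. The unique formal solution is
$$
Z(x)=\exp(P(x)).
$$
Uniqueness holds because the coefficients of $Z$ are determined recursively from the ODE.

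Next expand the exponential factorwise:
$$
\exp(P(x))=\prod_{j\ge1}\exp\!\Big(\frac{\zeta(js)x^j}{j}\Big)=\prod_{j\ge1}\sum_{c_j\ge0}\frac{\zeta(js)^{c_j}x^{jc_j}}{j^{c_j}c_j!}.
$$
The coefficient of $x^r$ is a sum over tuples with $c_1+2c_2+\cdots=r$. This forces $c_j=0$ for $j>r$, and the coefficient is exactly the right side of \eqref{r-fold-zeta-star}. Comparing coefficients of $x^r$ completes the proof.

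Only the bookkeeping needs care. We must check that the Cauchy product indexing matches the recurrence, including the $j=r$ term with $\zeta^\star_0=1$. We must also check that the infinite product is legitimate as a formal power series; it is, since each factor is $1+O(x^j)$. No analytic input beyond absolute convergence of the defining series for $s>1$ is needed. Nothing here is a real obstacle; the step most worth double-checking is the factor $1/j$ in $P$ coming from integrating $P'$.
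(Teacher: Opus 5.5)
Your proof is correct and follows the paper's argument: the same generating function $\sum_r \zeta^\star_r(s)x^r$, the same differential equation obtained from the recurrence \eqref{fundamental-formula-star}, and the same solution $\exp\bigl(\sum_{j\ge1}\zeta(js)x^j/j\bigr)$. The only difference is cosmetic. You read off the coefficient of $x^r$ by factoring the exponential as $\prod_j\exp(\zeta(js)x^j/j)$, whereas the paper first matches it with the complete Bell polynomial generating function to get \eqref{eq-zeta-star} and then applies \eqref{eq-c-bell}. This is the same computation, although the paper's intermediate Bell-polynomial form is what it later reuses in the proof of Theorem~\ref{cor-int-rel}.
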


The formulae \eqref{r-fold-t} and \eqref{r-fold-t-star} can be found in \cite[Corollary 3.1-3.2]{H2}, but novelty of our method is that we prove these results using the Bell polynomials. A brief discussion on the Bell polynomials is given in the next section. Our proof involving the Bell polynomials leads us to many other interesting consequences. For example, we get the following seemingly new relation among $\zeta_r(s)$, $\zeta_r^{\star}(s)$, $t_r(s)$ and $t_r^{\star}(s)$.

\begin{thm}\label{cor-int-rel} 
Let $r\geq 1$ be an integer.  Then for any $s\in \R$ with $s>1$, we have
$$
t_r(s)=\sum_{j=0}^{r}\frac{(-1)^{r-j}}{2^{(r-j)s}}\zeta_j(s)\zeta^{\star}_{r-j}(s) \ \text{ and }\
t^{\star}_r(s)=\sum_{j=0}^{r}\frac{(-1)^{r-j}}{2^{(r-j)s}}\zeta_{r-j}(s)\zeta^{\star}_{r}(s).
$$
\end{thm}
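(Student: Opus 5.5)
The plan is to pass to generating functions, where Theorems~\ref{main-thm} and \ref{main-thm-2} become exponential formulas and the relation $t(js)=(1-2^{-js})\zeta(js)$ turns the exponentials into a product. For $s>1$ and a formal variable $x$, put
$$
Z(x):=\sum_{r\ge 0}\zeta_r(s)x^r,\quad Z^{\star}(x):=\sum_{r\ge 0}\zeta^{\star}_r(s)x^r,\quad T(x):=\sum_{r\ge 0}t_r(s)x^r,\quad T^{\star}(x):=\sum_{r\ge 0}t^{\star}_r(s)x^r .
$$

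\textbf{Step 1 (exponential formulas).} The sum over $(c_1,\dots,c_r)$ with $\sum_j jc_j=r$ of $\prod_j a_j^{c_j}/(j^{c_j}c_j!)$ is exactly the coefficient of $x^r$ in $\exp\bigl(\sum_{j\ge1}a_jx^j/j\bigr)$. This is the standard exponential (cycle index) formula, i.e.\ the complete Bell polynomial identity. Apply it to \eqref{r-fold-zeta} with $a_j=(-1)^{j-1}\zeta(js)$; the sign $(-1)^{r+\sum c_j}$ equals $\prod_j\bigl((-1)^{j-1}\bigr)^{c_j}$ because $r=\sum jc_j$. Apply it to \eqref{r-fold-zeta-star} with $a_j=\zeta(js)$. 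This gives
$$
Z(x)=\exp\Bigl(\sum_{j\ge1}\frac{(-1)^{j-1}\zeta(js)}{j}x^j\Bigr),\qquad Z^{\star}(x)=\exp\Bigl(\sum_{j\ge1}\frac{\zeta(js)}{j}x^j\Bigr).
$$
The same formulas hold for $T,T^{\star}$ with $t(js)$ in place of $\zeta(js)$, by \eqref{r-fold-t} and \eqref{r-fold-t-star}. In particular $Z(x)Z^{\star}(-x)=1$.

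\textbf{Step 2 (key substitution).} Since $s>1$, $t(js)=\sum_{n\ \mathrm{odd}}n^{-js}=(1-2^{-js})\zeta(js)$. Splitting the exponent of $T$ accordingly gives
$$
T(x)=\exp\Bigl(\sum_{j}\frac{(-1)^{j-1}\zeta(js)}{j}x^j\Bigr)\exp\Bigl(\sum_{j}\frac{(-1)^{j}\zeta(js)}{j}\Bigl(\frac{x}{2^s}\Bigr)^j\Bigr)=Z(x)\,Z^{\star}\Bigl(-\frac{x}{2^s}\Bigr).
$$
In the same way,
$$
T^{\star}(x)=Z^{\star}(x)\exp\Bigl(-\sum_{j}\frac{\zeta(js)}{j}\Bigl(\frac{x}{2^s}\Bigr)^j\Bigr)=Z^{\star}(x)\,Z\Bigl(-\frac{x}{2^s}\Bigr).
$$
The second equality uses $\exp\bigl(-\sum_j \zeta(js)y^j/j\bigr)=\exp\bigl(\sum_j (-1)^{j-1}\zeta(js)(-y)^j/j\bigr)=Z(-y)$.

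\textbf{Step 3 (compare coefficients).} Extracting the coefficient of $x^r$ from $T(x)=Z(x)Z^{\star}(-x/2^s)$ gives $t_r(s)=\sum_{j=0}^r\zeta_j(s)\zeta^{\star}_{r-j}(s)(-1)^{r-j}2^{-(r-j)s}$, which is the first identity. Doing the same in $T^{\star}(x)=Z^{\star}(x)Z(-x/2^s)$ gives
$$
t^{\star}_r(s)=\sum_{j=0}^r\frac{(-1)^{r-j}}{2^{(r-j)s}}\,\zeta^{\star}_j(s)\,\zeta_{r-j}(s).
$$
This does not match the printed second identity, which has $\zeta^{\star}_{r}(s)$. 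That index appears to be a misprint for $\zeta^{\star}_j(s)$: already for $r=1$, the printed form gives $t^\star_1(s)=\zeta^\star_1(s)(1-2^{-s})\zeta_0(s)\cdot$ only if the $j=0$ term pairs $\zeta_1$ with $\zeta^\star_0$, whereas with $\zeta^\star_r$ fixed it gives $\zeta(s)\bigl(1-2^{-s}\zeta(s)\bigr)\ne t(s)$. So the statement proved here is the corrected version above.

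All manipulations are identities of formal power series in $x$, whose coefficients are finite polynomial expressions in the convergent values $\zeta(js)$. So no analytic issue arises. The only step needing care is the bookkeeping in Step 1: the sign $(-1)^{r+\sum c_j}$ must match the exponential formula, and the factor $2^{-js}$ must be absorbed as the rescaling $x\mapsto -x/2^s$ with the correct sign.
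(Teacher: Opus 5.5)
Your argument is correct and is essentially the paper's proof in generating-function form. The paper substitutes $t(js)=(1-2^{-js})\zeta(js)$ into the Bell-polynomial expressions \eqref{eq-t-r} and \eqref{eq-t-star}, then applies the addition formula \eqref{eq-sum-variable} and homogeneity \eqref{hom-prop}; these are exactly the coefficientwise versions of your factorization $\exp(A+B)=\exp(A)\exp(B)$ and of the rescaling $x\mapsto -x/2^s$.

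You are also right that the second printed identity contains a misprint: $\zeta^{\star}_r(s)$ should be $\zeta^{\star}_j(s)$, which is what the paper's own argument starting from \eqref{eq-t-star} actually yields. Your $r=1$ sentence is muddled in its wording, but its conclusion $\zeta(s)\bigl(1-2^{-s}\zeta(s)\bigr)\neq(1-2^{-s})\zeta(s)$ is correct, since $\zeta(s)>1$ for $s>1$.
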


\begin{rmk}\rm
The left hand side and the right hand side of each of \eqref{r-fold-zeta},  \eqref{r-fold-t}, \eqref{r-fold-zeta-star} and \eqref{r-fold-t-star} have meromorphic extensions. Therefore, these identities can be considered as identities of meromorphic functions on $\C$ and can be used to study $\zeta_r(s)$, $t_r(s)$ and their star-variants at non-positive integer points. 
\end{rmk}


One can easily see that  $\zeta_r(2k),t_r(2k),\zeta_r^{\star}(2k),t_r^{\star}(2k) \in \Q^* \pi^{2k}$ and $\zeta_r(-2k)=t_r(-2k)=\zeta_r^{\star}(-2k)=t_r^{\star}(-2k)=0$ for all $k\geq 1$. But here, we also look at the value at $0$.  We prove that $\zeta_r(0)$ and $\zeta^{\star}_r(0)$ are non-zero rational numbers, whereas $t_r(0)=t^{\star}_r(0)=0$.
As a corollary to the proof of \thmref{main-thm} and \thmref{main-thm-2}, we obtain the exact values in terms of certain binomial coefficients. Many such interesting results are included in Section \ref{sec-app}.


We conclude this section by mentioning that Merca \cite{MM} obtained the following similar looking expression for $\zeta(2k)$, for an integer $k\geq 1$:
\begin{align*}
\zeta(2k)&=k\pi^{2k}\sum_{ \substack{c_1,c_2, \ldots,c_k \geq 0, \\ c_1+2c_2+\cdots+kc_k=k}}\frac{(-1)^{k+c_1+\cdots+c_k}(c_1+\cdots+c_k-1)!}{3!^{c_1}c_1! \ 5!^{c_2}c_2!\cdots (2k+1)!^{c_k}c_k!}\\
&=\frac{(2\pi)^{2k}}{2(2^{2k}-2)}\sum_{\substack{c_1,c_2,\ldots,c_k\geq 0\\ c_1+2c_2+\cdots+kc_k=k}}\frac{(-1)^{k+c_1+\cdots+c_k}(c_1+\cdots+c_k)!}{3!^{c_1}c_1! \ 5!^{c_2}c_2!\cdots (2k+1)!^{c_k}c_k!}.
\end{align*}


\section{Preliminaries}

We first recall the definition of the Bell polynomials. Let $n\geq k\geq 1$ be integers. The partial Bell polynomial   $\mathbf{B}_{n,k}:=\mathbf{B}_{n,k}(x_1,\ldots,x_{n-k+1})$ is a polynomial in (infinitely many) indeterminates  $x_1, x_2,\ldots$, defined by the coefficients of the formal double series
\begin{align*}
\phi(t,u):=\exp\left(u\sum_{m\geq1}x_m\frac{t^m}{m!}\right)=1+\sum_{n\geq k\geq 1}\mathbf{B}_{n,k}\frac{t^n}{n!}u^k
=1+\sum_{n\geq 1}\left(\sum_{k=1}^{n}u^k\mathbf{B}_{n,k}\right)\frac{t^n}{n!}.
\end{align*}
It is natural to set ${\bf B}_{0,0}=1$. Note that by the above definition, $\mathbf{B}_{n,k}=0$ for all $1\leq n<k$.  It is easy to see that
$$
{\bf B}_{1,1}=x_1, {\bf B}_{2,1}=x_2, {\bf B}_{2,2}=x_1^2, {\bf B}_{3,1}=x_3, {\bf B}_{3,2}=3x_1x_2, {\bf B}_{3,3}=x_1^3.
$$
In general, one can see that for $n \ge 1$,
$$
{\bf B}_{n,1}=x_n, {\bf B}_{n,n}=x_1^{n}.
$$
The partial Bell polynomials satisfy many recurrence formulae and special values of these functions are well-studied (for more details see \cite{L}).
One often assigns the weight $i$, to the indeterminate $x_i$ and then the weight of a monomial is defined to be the sum of the weight of each of the indeterminate present in that monomial (counted with multiplicity). We now recall the following key theorem of Bell \cite{B}, after whom these polynomials are named. 

\begin{thm}\label{them-Bell}
For integers $n\geq k\geq 1$, the partial Bell polynomial $\mathbf{B}_{n,k}$ is a homogeneous polynomial of degree $k$ and  weight $n$, and has integer coefficients. More precisely,
$$
\mathbf{B}_{n,k}(x_1, x_2, \ldots, x_{n-k+1})=\sum_{ \substack{c_1,c_2, \ldots,c_n \geq 0, \\c_1+2c_2+3c_3+\cdots+nc_n=n,\\ c_1+c_2+\cdots+c_n=k}}\frac{n!}{(1!)^{c_1}c_1!\  (2!)^{c_2}c_2!\cdots (n!)^{c_n}c_n!}x_1^{c_1}x_2^{c_2}\cdots x_n^{c_n}.
$$
\end{thm}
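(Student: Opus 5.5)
The statement to prove is Bell's theorem (Theorem \ref{them-Bell}): an explicit formula for $\mathbf{B}_{n,k}$, and as a consequence integrality, homogeneity of degree $k$, and weight $n$. The plan is to extract the coefficient of $u^k t^n/n!$ in the generating function $\phi(t,u)$ directly.

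\textbf{Expanding the generating function.} Write
$$
\phi(t,u)=\exp\Bigl(u\sum_{m\ge1}x_m\tfrac{t^m}{m!}\Bigr)=\prod_{m\ge1}\exp\Bigl(u x_m\tfrac{t^m}{m!}\Bigr).
$$
This factorization is legitimate in the ring of formal power series in $t,u$, because only finitely many factors contribute to any fixed power of $t$. Expanding each factor gives
$$
\exp\Bigl(u x_m\tfrac{t^m}{m!}\Bigr)=\sum_{c_m\ge0}\frac{u^{c_m}x_m^{c_m}t^{mc_m}}{(m!)^{c_m}c_m!}.
$$
Multiplying the factors out, a general term is indexed by a finitely supported sequence $(c_1,c_2,\ldots)$ of non-negative integers. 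Its contribution is
$$
u^{\sum c_m}\,t^{\sum m c_m}\prod_m \frac{x_m^{c_m}}{(m!)^{c_m}c_m!}.
$$

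\textbf{Reading off the coefficient.} Fix $n\ge k\ge1$ and collect the terms with $\sum m c_m=n$ and $\sum c_m=k$. The constraint $\sum m c_m=n$ forces $c_m=0$ for $m>n$. It also forces $c_m=0$ for $m>n-k+1$: since the remaining $k-1$ parts are each at least $1$, any nonzero $c_m$ satisfies $m\le n-(k-1)$. Hence only $x_1,\ldots,x_{n-k+1}$ occur. Multiplying the collected coefficient of $u^kt^n$ by $n!$ yields exactly the displayed formula.

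\textbf{Degree and weight.} Each monomial $x_1^{c_1}\cdots x_n^{c_n}$ in the formula has degree $\sum c_m=k$ and weight $\sum mc_m=n$. So $\mathbf{B}_{n,k}$ is homogeneous of degree $k$ and weight $n$.

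\textbf{Integrality.} This is the only step needing an argument. The coefficient
$$
\frac{n!}{\prod_m (m!)^{c_m}c_m!}
$$
counts the set partitions of $\{1,\ldots,n\}$ into exactly $c_m$ blocks of size $m$ for each $m$. To see this, start from the $n!$ orderings of $\{1,\ldots,n\}$ and cut each ordering into consecutive blocks of the prescribed sizes. Each resulting set partition then arises exactly $\prod_m (m!)^{c_m}c_m!$ times: a factor $m!$ for reordering within each block of size $m$, and a factor $c_m!$ for permuting the blocks of equal size $m$. This double count realises the coefficient as the cardinality of a set, so it is a positive integer.
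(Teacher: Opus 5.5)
Your proof is correct and complete. The paper gives no proof of this statement; it recalls it as a classical theorem of Bell \cite{B}, with \cite{L} as background. So the comparison is between a citation and your self-contained derivation.

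Your route is the standard one:
\begin{itemize}
\item factor $\phi(t,u)=\prod_{m\ge1}\exp(u x_m t^m/m!)$, justified $t$-adically as you note;
\item read off the coefficient of $u^k t^n$;
\item get integrality by identifying $n!/\prod_m (m!)^{c_m}c_m!$ with the number of set partitions of $\{1,\ldots,n\}$ having $c_m$ blocks of size $m$.
\end{itemize}
You could skip the infinite product: apply the multinomial theorem directly to the coefficient $\frac{1}{k!}\bigl(\sum_m x_m t^m/m!\bigr)^k$ of $u^k$, which gives the same sum in one step.

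What your version buys over the paper's citation is an actual explanation of why only $x_1,\ldots,x_{n-k+1}$ occur and why the coefficients are integers, both of which the paper takes from \cite{B}.

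One detail is worth making explicit. When you cut a permutation into consecutive blocks, fix the order of the block sizes (say all blocks of size $1$ first, then size $2$, and so on). Then the map to set partitions is well defined, and each fibre has exactly $\prod_m (m!)^{c_m}c_m!$ elements.
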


One of the many remarkable application of these Bell polynomials is that they can be used to express Fa\`{a} di Bruno's formula for the derivative of the composition of two functions (see \cite{WJ}). This then leads to many interesting applications of Fa\`{a} di Bruno's formula (see also \cite{TM}).

For an integer $n\geq 1$, the complete Bell polynomial $\mathbf{Y}_n(x_1,\ldots, x_n)$ is defined by 
$$
\mathbf{Y}_n(x_1,\ldots, x_n)=\sum_{k=1}^{n}\mathbf{B}_{n,k}(x_1,\ldots, x_{n-k+1}).
$$ 
For example,
$$
\mathbf{Y}_1(x_1)=x_1, \ \mathbf{Y}_2(x_1,x_2)=x_1^2+x_2, \ \mathbf{Y}_3(x_1,x_2,x_3)=x_1^3+3x_1x_2+x_3,
$$
and in general, using \thmref{them-Bell}, we have
\begin{align}\label{eq-c-bell}
\mathbf{Y}_n(x_1,\ldots,x_n)=\sum_{ \substack{c_1,c_2, \ldots,c_n \geq 0, \\c_1+2c_2+3c_3+\cdots+nc_n=n}}\frac{n!}{(1!)^{c_1}c_1! \ (2!)^{c_2}c_2!\cdots (n!)^{c_n}c_n!}{x_1}^{c_1}{x_2}^{c_2}\cdots {x_n}^{c_n}.
\end{align}
It is easy to see from this expression that for any  $\alpha\in \C$, 
\begin{align}\label{hom-prop}
\mathbf{Y}_{n}(\alpha x_1,\alpha^2x_2,\ldots,\alpha^nx_n)=\alpha^n\mathbf{Y}_{n}(x_1,x_2,\ldots,x_n).
\end{align}
From the definition of the partial Bell polynomials, we see that the generating function for the complete Bell polynomials is given by
\begin{align}\label{gen-complete-Bell}
\phi(t,1)=\exp\left(\sum_{m\geq 1}x_m\frac{t^m}{m!}\right)
=1+\sum_{n\geq 1}\mathbf{Y}_n(x_1,\ldots,x_n)\frac{t^n}{n!}.
\end{align}
It is natural to set ${\bf Y}_{0}=1$. Note that we also have 
\begin{align}\label{eq-sum-variable}
\mathbf{Y}_n(x_1+y_1,x_2+y_2,\ldots, x_n+y_n)=\sum_{j=0}^{n}\binom{n}{j}\mathbf{Y}_j(x_1,x_2\ldots, x_{j})\mathbf{Y}_{n-j}(y_1,y_2\ldots, y_{n-j}).
\end{align}

\section{Proof of the theorems}\label{sec-proof}
\begin{proof}[Proof of \thmref{main-thm}.]
Since $\zeta_r(s)$ and $t_r(s)$ satisfy the same recurrence formula \eqref{fundamental-formula}, we give the key steps of the proof of \eqref{r-fold-zeta}. Recall \eqref{fundamental-formula} for $\zeta_r(s)$, namely,
\begin{equation*}
r\zeta_r(s)=\sum_{j=1}^{r}(-1)^{j-1}\zeta_{r-j}(s)\zeta(js).
\end{equation*}
For a real number with $s>1$, assume that $a_r=(-1)^r\zeta_r(s)$ and $b_r=\zeta(rs)$ for all $r\geq 1$. If we set $a_0=1$, then we have 
\begin{equation}\label{rec-ai}
ra_r=-\sum_{j=1}^{r}a_{r-j}b_j.
\end{equation}
To prove the required result, it is enough to write $a_r$ in terms of $b_1, \ldots, b_r$ using the above recurrence formula. First, consider the formal power series
\begin{align*}
p(x):=\sum_{n\geq 0}a_nx^n, \quad \text{and}\quad
q(x):=\sum_{n\geq 1}b_nx^n.
\end{align*}
From recurrence formula \eqref{rec-ai}, we have 
\begin{align*}
\sum_{n\geq 1}na_nx^{n-1}=-\sum_{n\geq 1}\sum_{i=1}^{n}a_{n-i}b_ix^{n-1}=
-\sum_{n\geq 1}\sum_{i=1}^{n}a_{n-i}b_ix^{n-1}=-\sum_{i\geq 1}b_i\sum_{n\geq i}a_{n-i}x^{n-1}.
\end{align*}
Now,
\begin{align*}
\begin{split}
\sum_{n\geq 1}na_nx^{n-1}=-\sum_{i\geq 1}b_i\sum_{n\geq i}a_{n-i}x^{n-1}&=-\sum_{i\geq 1}b_i\sum_{n\geq 0}a_{n}x^{n+i-1}
=-g(x)\sum_{i\geq 1}b_ix^{i-1}
=-\frac{p(x)q(x)}{x}.
\end{split}
\end{align*}
Therefore,
$$
p'(x)=-\frac{p(x)q(x)}{x}.
$$
The solution of this differential equation is given by
\begin{align*}
p(x)=\exp\left(-\sum_{n\geq 1}\frac{b_n}{n}x^n\right).
\end{align*}
By comparing the above equation with \eqref{gen-complete-Bell}, we get for every integer $r\geq 1$,
\begin{align*}
a_r&=\frac{1}{r!}\mathbf{Y}_r\left(-0!b_1, -1!b_2, \ldots,-(r-1)!b_r\right).
\end{align*}
By substituting the values of $a_i$'s and $b_i$'s in the above equation we get
\begin{align}\label{eq-zeta}
\zeta_r(s)=\frac{(-1)^r}{r!}\mathbf{Y}_r\left(-0!\zeta(s), -1!\zeta(2s),\ldots,-(r-1)!\zeta(rs)\right).
\end{align}
Now using \eqref{eq-c-bell}, we get the desired expression \eqref{r-fold-zeta}. Similarly, taking $a_r=(-1)^rt_r(s)$ and $b_r=t(rs)$, we get the identity
\begin{align}\label{eq-t-r}
t_r(s)=\frac{(-1)^r}{r!}\mathbf{Y}_r\left(-0!t(s), -1!t(2s), \ldots,-(r-1)!t(rs)\right),
\end{align}
which gives \eqref{r-fold-t}, using \eqref{eq-c-bell}.
\end{proof}

\begin{rmk}\label{rmk-zeta-r}\rm
As $ \mathbf{Y}_2(x_1,x_2)=x_1^2+x_2, \ \mathbf{Y}_3(x_1,x_2,x_3)=x_1^3+3x_1x_2+x_3$, we get back \eqref{zeta_2-zeta_3} by taking $x_i=-(i-1)!\zeta(is)$ for $i=1, 2, 3$.
\end{rmk}

\begin{proof}[Sketch of the proof of \thmref{main-thm-2}] We recall \eqref{fundamental-formula-star}, namely,
\begin{align*}
r\zeta^{\star}_r(s)=\sum_{j=1}^{r}\zeta^{\star}_{r-j}(s)\zeta(js) \ \text{ and } \ rt^{\star}_r(s)=\sum_{j=1}^{r}t^{\star}_{r-j}(s)t(js).
\end{align*}
To prove identity \eqref{r-fold-zeta-star}, we take $a_r= \zeta^{\star}_{r}(s)$ and $b_r=\zeta(rs)$ for $r\geq 1$ and $a_0=1$. Then following the proof of \thmref{main-thm}, we get 
$$
p'(x)=\frac{p(x)q(x)}{x},
$$
where $p(x)=\sum_{n\geq 0}a_nx^n$ and $q(x)=\sum_{n\geq 1}b_nx^n$. This gives 
$$
p(x)=\exp\left(\sum_{n\geq 1}\frac{b_n}{n}x^n\right).
$$
Now again comparing the above equation with \eqref{gen-complete-Bell}, and substituting the values of $a_i$'s and $b_i$'s we get
\begin{align}\label{eq-zeta-star}
\zeta^{\star}_r(s)&=\frac{1}{r!}\mathbf{Y}_r\left(0!\zeta(s), 1!\zeta(2s), \ldots,(r-1)!\zeta(rs)\right).
\end{align}
Similarly, taking $a_r=t^{\star}_r(s)$ and $b_r=t(rs)$, we get 
\begin{align}\label{eq-t-star}
t^{\star}_r(s)&=\frac{1}{r!}\mathbf{Y}_r\left(0!t(s), 1!t(2s), \ldots,(r-1)!t(rs)\right).
\end{align}
Using \eqref{eq-c-bell}, we get the desired expressions \eqref{r-fold-zeta-star} and \eqref{r-fold-t-star}. This completes the proof.
\end{proof}

\begin{proof}[Proof of \thmref{cor-int-rel}] 
Let $s>1$ be a real number. It is easy to note that
\begin{align}\label{zeta-t}
t(s)=t^{\star}(s)=\left(1-{2^{-s}}\right)\zeta(s).
\end{align}
Now we use \eqref{zeta-t} and \eqref{eq-sum-variable}, together with \eqref{hom-prop}, in \eqref{eq-t-r} to write
\begin{align*}
t_r(s)&= \frac{(-1)^r}{r!}\mathbf{Y}_r\left(-0!t(s), -1!t(2s),\ldots,-(r-1)!t(rs)\right)\\
&=\frac{(-1)^r}{r!}\mathbf{Y}_r\left(-0!(1-2^{-s})\zeta(s), -1!(1-2^{-2s})\zeta(2s),\ldots,-(r-1)!(1-2^{-rs})\zeta(rs)\right)\\
&\begin{aligned}
=\frac{(-1)^r}{r!}
\sum_{j=0}^{r}\binom{r}{j}\frac{1}{2^{(r-j)s}}&\mathbf{Y}_j(-0!\zeta(s), -1!\zeta(2s),\ldots,-(j-1)!\zeta(js))\times \\
&\mathbf{Y}_{r-j}\left(0!{\zeta(s)}, 
1!{\zeta(2s)},\ldots,(r-j-1)!{\zeta(js)}\right).
\end{aligned}
\end{align*}
Now using \eqref{eq-zeta} and \eqref{eq-zeta-star},  we get that
$$
t_r(s)=\frac{(-1)^r}{r!}
\sum_{j=0}^{r}\binom{r}{j}\frac{{(-1)^jj!(r-j)!}}{2^{(r-j)s}}\zeta_j(s)\zeta^{\star}_{r-j}(s)=\sum_{j=0}^{r}\frac{(-1)^{r-j}}{2^{(r-j)s}}\zeta_j(s)\zeta^{\star}_{r-j}(s).
$$
To get the identity for $t^{\star}_r(s)$, we follow the similar steps as above, starting with \eqref{eq-t-star}.
\end{proof}

\begin{proof}[Proof of \rmkref{rmk-MMT}]
As $\zeta(s)$ has only a simple pole at $s=1$ with residue $1$, it is evident from \eqref{r-fold-zeta} that $\zeta_r(s)$ has only possible poles at $s=1/k$ for $1\leq k\leq r$. Now to compute the order of the pole at $s=1/k$, we first note that the largest possible exponent of $\zeta(ks)$ in \eqref{r-fold-zeta}. Note that the largest possible value of  $c_k$ satisfying the equation 
\begin{align}\label{eq-part}
c_1+2c_2+\cdots+rc_r=r,
\end{align}
where $c_i$ are non-negative integers for all $1\leq i\leq r$, is exactly $l=[r/k]$.

In order to complete the proof, we first compute the coefficient of $(ks-1)^{-l}$ in the Laurent series expansion of $\zeta_r(s)$ around $s=1/k$ and prove that it is non-zero. Let $R(k;\zeta_{r})$ be the coefficient of $(ks-1)^{-l}$. Note that the condition 
$c_1+2c_2+\cdots+rc_r=r$ now reduces to conditions $c_1+2c_2+\cdots+mc_m=m$ and $c_i=0$ for $m<i \le r, i \ne k$, as we have $c_k=l$. Hence we have 
\begin{align*}
R(k;\zeta_{r})&=
\sum_{\substack{c_1,c_2, \ldots,c_m \geq 0, \\c_1+2c_2+\cdots+mc_m=m}}\frac{(-1)^{kl+m+c_1+c_2+\cdots+c_m+l}}{1^{c_1}c_1! \  2^{c_2}c_2!\cdots m^{c_m}c_m!\cdot k^ll!}\zeta\left(\frac{1}{k}\right)^{c_1}\zeta\left(\frac{2}{k}\right)^{c_2}\cdots\zeta\left(\frac{m}{k}\right)^{c_m}\\
&=\frac{(-1)^{l(k+1)}}{k^ll!}\sum_{\substack{c_1,c_2, \ldots,c_m \geq 0, \\c_1+2c_2+\cdots+mc_m=m}}\frac{(-1)^{m+c_1+c_2+\cdots+c_m}}{1^{c_1}c_1! \  2^{c_2}c_2!\cdots m^{c_m}c_m!}\zeta\left(\frac{1}{k}\right)^{c_1}\zeta\left(\frac{2}{k}\right)^{c_2}\cdots\zeta\left(\frac{m}{k}\right)^{c_m}.
\end{align*}
Since $\zeta(s)<0$ for all $s\in (0,1)$, we have $\zeta(i/k)<0$ for all $1\leq i\leq m<k$. Hence
\begin{align*}
\sum_{\substack{c_1,c_2, \ldots,c_m \geq 0, \\c_1+2c_2+\cdots+mc_m=m}}\frac{(-1)^{m+c_1+c_2+\cdots+c_m}}{1^{c_1}c_1! \  2^{c_2}c_2!\cdots m^{c_m}c_m!}\zeta\left(\frac{1}{k}\right)^{c_1}\zeta\left(\frac{2}{k}\right)^{c_2}\cdots\zeta\left(\frac{m}{k}\right)^{c_m}
\end{align*}
is non-zero with sign $(-1)^m$ and is equal to $\zeta_m(1/k)$ by \eqref{r-fold-zeta}.
\end{proof}

\section{Applications to the special values of $\zeta_r(s)$ and its variants}\label{sec-app}
We have already seen that the formulae \eqref{r-fold-zeta},  \eqref{r-fold-t}, \eqref{r-fold-zeta-star} and \eqref{r-fold-t-star}, lead us to the fact that $\zeta_r(2k),t_r(2k),\zeta_r^{\star}(2k),t_r^{\star}(2k) \in \Q^* \pi^{2k}$ and $\zeta_r(-2k)=t_r(-2k)=\zeta_r^{\star}(-2k)=t_r^{\star}(-2k)=0$ for all $k\geq 1$, as we know that
$$\zeta(-2k)=0 \text{ and } \zeta(2k)=(-1)^{k+1}\frac{B_{2k}(2\pi)^{2k}}{2(2k)!},$$ where $B_n$ denotes the $n$-th Bernoulli number. 
Now using \eqref{eq-zeta} and its variants, it is possible to give a more explicit formula, in terms of the Bell polynomials. Substituting $s=2k$ in \eqref{eq-zeta}, we get
\begin{align*}
\zeta_{r}(2k)&=\frac{(-1)^r}{r!}{\bf Y}_r\left(-\zeta(2k), -1!\zeta(4k), \ldots,-(r-1)!\zeta(2rk)\right)\\
&=\frac{(-1)^r}{r!}{\bf Y}_r\left(-\frac{(-1)^{k+1}B_{2k}}{2(2k)!}(2\pi)^{2k},\ldots,-(r-1)!\frac{(-1)^{rk+1}B_{2rk}}{2(2rk)!}(2\pi)^{2rk}\right).
\end{align*}
Using \eqref{hom-prop}, we have
$$
\zeta_r(2k)=\frac{(-1)^{r(k+1)}}{r!}(2\pi)^{2rk}{\mathbf{Y}_r\left(0!\frac{B_{2k}}{2(2k)!}, 1!\frac{B_{4k}}{2(4k)!},\ldots,(r-1)!\frac{B_{2rk}}{2(2rk)!}\right)}.
$$
Similarly, using \eqref{eq-zeta-star}, we get
$$
\zeta^{\star}_r(2k)=\frac{(-1)^{rk}}{r!}(2\pi)^{2rk}{\mathbf{Y}_r\left(-0!\frac{B_{2k}}{2(2k)!}, -1!\frac{B_{4k}}{2(4k)!},\ldots,-(r-1)!\frac{B_{2rk}}{2(2rk)!}\right)}.
$$
\begin{rmk}\label{rmk-value-2}\rm
Using \cite[eq. (1), (2)]{J}, we have 
$$
\mathbf{Y}_r\left(0!\frac{B_{2}}{2(2)!}, 1!\frac{B_{4}}{2(4)!},\ldots,(r-1)!\frac{B_{2r}}{2(2r)!}\right)=\frac{r!}{2^{2r}(2r+1)!},
$$
and 
$$
\mathbf{Y}_r\left(-0!\frac{B_{2}}{2(2)!}, -1!\frac{B_{4}}{2(4)!},\ldots,-(r-1)!\frac{B_{2r}}{2(2r)!}\right)=\frac{r!(2^{1-2r}-1)B_{2r}}{(2r)!}.
$$
Therefore, we recover the well known formulae
$$
\zeta_r(2)=\frac{\pi^{2r}}{(2r+1)!} \ \text{ and } \ \zeta^{\star}_r(2)=\frac{(-1)^{r+1}(2^{2r}-2)B_{2r}\pi^{2r}}{(2r)!}.
$$
One can now use \thmref{cor-int-rel} to derive analogous formulae for $t_r(2)$ and $t^\star_r(2)$ as well.
\end{rmk}


We now turn our attention to the special values at $0$. Note that since $t(0)=0$, using \eqref{r-fold-t} and \eqref{r-fold-t-star}, we get $t^{\star}_r(0)=t_r(0)=0.$
We now consider the special values $\zeta_r(0)$ and $\zeta^{\star}_r(0)$.
\begin{cor}\label{cor-value-zero}
Let $r\geq 1$ be an integer. Then  
\begin{align}\label{eq-zero}
\zeta_r(0)=\frac{(-1)^r}{4^r}\binom{2r}{r}\ \text{ and } \
\zeta^{\star}_r(0)=-\frac{1}{r2^{2r-1}}\binom{2r-2}{r-1}.
\end{align}
\end{cor}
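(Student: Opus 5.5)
The plan is to evaluate the generating functions from the proofs of \thmref{main-thm} and \thmref{main-thm-2} at $s=0$. By the remark following \thmref{cor-int-rel}, the identities \eqref{eq-zeta} and \eqref{eq-zeta-star} are identities of meromorphic functions on $\C$. The right-hand sides are polynomials in $\zeta(s),\zeta(2s),\ldots,\zeta(rs)$, and $\zeta(js)$ has its only pole at $s=1/j\neq 0$. So both sides are holomorphic at $s=0$, and we may simply substitute $s=0$. This gives $\zeta(js)|_{s=0}=\zeta(0)=-1/2$ for every $j\ge 1$.

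For $\zeta_r(0)$, set $a_r=(-1)^r\zeta_r(0)$ and $b_n=\zeta(0)=-1/2$ for all $n$. The recurrence \eqref{rec-ai} holds at $s=0$ by the same continuation argument, or directly from \eqref{eq-zeta} together with \eqref{gen-complete-Bell}. Hence, exactly as in the proof of \thmref{main-thm},
$$
\sum_{r\ge 0}(-1)^r\zeta_r(0)x^r=\exp\left(\frac12\sum_{n\ge1}\frac{x^n}{n}\right)=\exp\left(-\tfrac12\log(1-x)\right)=(1-x)^{-1/2}.
$$
The binomial series gives the coefficient of $x^r$ as $\binom{-1/2}{r}(-1)^r=\binom{2r}{r}/4^r$, so $\zeta_r(0)=(-1)^r\binom{2r}{r}/4^r$.

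For $\zeta^\star_r(0)$, the proof of \thmref{main-thm-2} gives
$$
\sum_{r\ge0}\zeta^\star_r(0)x^r=\exp\left(-\tfrac12\sum_{n\ge1}\frac{x^n}{n}\right)=(1-x)^{1/2}.
$$
For $r\ge1$ the coefficient of $x^r$ is $(-1)^r\binom{1/2}{r}$. The standard identity $\binom{1/2}{r}=\frac{(-1)^{r-1}}{r\,2^{2r-1}}\binom{2r-2}{r-1}$ then yields $\zeta^\star_r(0)=-\frac{1}{r2^{2r-1}}\binom{2r-2}{r-1}$. As a sanity check, $r=1$ gives $-1/2=\zeta(0)$.

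The only delicate step is justifying that the formal generating-function identity may be specialized at $s=0$. This reduces to \eqref{eq-zeta} and \eqref{eq-zeta-star} being valid as meromorphic identities with both sides regular at $0$. The rest is the routine binomial-coefficient computation for $\binom{\pm1/2}{r}$.
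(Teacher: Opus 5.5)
Your proof is correct. It starts where the paper starts: specialize \eqref{eq-zeta} and \eqref{eq-zeta-star} at $s=0$, where every $b_n$ becomes $\zeta(0)=-\tfrac12$. The difference is in how you evaluate the resulting coefficients.

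The paper works termwise in the Bell polynomial. It uses that $\mathbf{B}_{r,k}$ is homogeneous of degree $k$ and quotes $\mathbf{B}_{r,k}(0!,1!,\ldots,(r-k)!)=|s(r,k)|$ from Comtet. It then sums with the rising-factorial identity \eqref{gen-stirling} to get $\zeta_r(0)=\frac{(-1)^r}{r!}\prod_{k=0}^{r-1}(k+\tfrac12)$ and $\zeta^\star_r(0)=\frac{1}{r!}\prod_{k=0}^{r-1}(k-\tfrac12)$, which it simplifies afterwards to the binomial forms.

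You instead go back one level to the exponential $p(x)$ and sum it in closed form, $\exp\bigl(\pm\tfrac12\sum_{n\ge1}x^n/n\bigr)=(1-x)^{\mp1/2}$, so the binomial series gives the answer at once. The two computations are the same identity read at different levels: $\sum_k|s(r,k)|c^k=c(c+1)\cdots(c+r-1)$ is exactly the coefficient form of $\exp\bigl(c\sum_{n\ge1}x^n/n\bigr)=(1-x)^{-c}$.

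What each route buys:
\begin{itemize}
\item Yours is shorter and self-contained. It needs neither the Stirling-number lemma nor the final simplification of the products.
\item The paper's stays inside the Bell-polynomial formalism it advertises (see its remark that the proof uses only complete Bell polynomials). Along the way it records the general evaluation $\mathbf{Y}_r\bigl(c\,0!,c\,1!,\ldots,c\,(r-1)!\bigr)=c(c+1)\cdots(c+r-1)$.
\end{itemize}

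Your justification for setting $s=0$ is the one the paper uses implicitly. It is worth saying explicitly that it concerns the one-variable continuation along the diagonal, which is what $\zeta_r(s)$ means here. The several-variable $\zeta_r(s_1,\ldots,s_r)$ is not regular at the origin for $r\ge2$; for $\zeta_2$ the origin lies on the singular line $s_1+s_2=0$.
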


\begin{proof}
Using \eqref{eq-zeta}, we have
\begin{align*}
\zeta_{r}(0)=\frac{(-1)^r}{r!}{\mathbf{Y}_r\left(-0!\zeta(0), -1!\zeta(0),\ldots,-(r-1)!\zeta(0)\right)}
=\frac{(-1)^r}{r!}\mathbf{Y}_r\left(\frac{1}{2}, \frac{1!}{2}, \ldots, \frac{(r-1)!}{2}\right).
\end{align*}
By definition,
$$
\mathbf{Y}_r\left(\frac{0!}{2}, \frac{1!}{2}, \ldots, \frac{(r-1)!}{2}\right)=\sum_{k=1}^{r}\mathbf{B}_{r,k}\left(\frac{0!}{2}, \frac{1!}{2},\ldots, \frac{(r-1)!}{2}\right)=\sum_{k=1}^{r}\frac{\mathbf{B}_{r,k}\left({0!}, {1!},\ldots, {(r-1)!}\right)}{2^k}.
$$
It is well known that (see \cite[Chapter 5]{L})
$$
\mathbf{B}_{r,k}\left({0!}, {1!},\ldots, {(r-1)!}\right)=|s(r,k)|,
$$
where $|s(r,k)|$ is the unsigned Stirling number of the first kind given by rising factorial, i.e.,
\begin{align}\label{gen-stirling}
x\left(x+1\right)\cdots \left(x+r-1\right)=\sum_{k=1}^{r}{|s(r,k)|}{x^k}.
\end{align}
Therefore, we have
$$
\mathbf{Y}_r\left(\frac{0!}{2}, \frac{1!}{2}, \ldots, \frac{(r-1)!}{2}\right)=\sum_{k=1}^{r}\frac{|s(r,k)|}{2^k}=\prod_{k=0}^{r-1}\left(k+\frac{1}{2}\right),
$$
and
$$
\mathbf{Y}_r\left(-\frac{0!}{2}, -\frac{1!}{2}, \ldots, -\frac{(r-1)!}{2}\right)=\sum_{k=1}^{r}\frac{(-1)^k|s(r,k)|}{2^k}=\prod_{k=0}^{r-1}\left(k-\frac{1}{2}\right),
$$
Hence, we get
$$
\zeta_r(0)=\frac{(-1)^r}{r!}\prod_{k=0}^{r-1}\left(k+\frac{1}{2}\right), \quad \zeta^{\star}_r(0)=\frac{1}{r!}\prod_{k=0}^{r-1}\left(k-\frac{1}{2}\right).
$$
Simplifying the above identities, we get the desired formulae.
\end{proof}
\begin{rmk}\label{asym-at-zero} \rm 
Using \cite[Corollary 2]{K} also one can get the value of $\zeta_r(0)$. But here our proof only uses of the complete Bell polynomials. 
\end{rmk}
\begin{rmk}\rm  Using Stirling's formula
$r! \sim \sqrt{2\pi r} \left( r/e \right)^r$,
as $r\rightarrow\infty$, we have $\left(2r \atop r \right) \sim 4^r/\sqrt{\pi r}$, as $r\rightarrow\infty$. Hence we have, as $r\rightarrow\infty$,
$$
\zeta_r(0)\sim\frac{(-1)^r}{\sqrt{\pi r}}, \quad \zeta^\star_r(0)\sim-\frac{1}{2\sqrt{\pi r^3}}
$$
\end{rmk}

\noindent {\bf Acknowledgements}:
The research of the author is supported by PMRF (grant number: 1402688, cycle 10). The author has no competing interests to declare that are relevant to the content of this article. This manuscript has no associated data.



\begin{thebibliography}{100}





%
\bibitem{B}
E. T. Bell, {\it Exponential polynomials}, Ann. of Math. (2) {\bf 35} (1934), no. 2, 258–277.
\bibitem{L}
L. Comtet, {\it Advanced Combinatorics, The Art of Finite and Infinite Expansions}, D. Reildel Publishing Company 1974.

\bibitem{H}
M. Hoffman, {\it Multiple harmonic series}, Pacific J. Math {\bf 152} (1992), no. 2, 275–290.

\bibitem{H2}
M. Hoffman, {\it An odd variant of multiple zeta values},
Commun. Number Theory Phys. {\bf 13} (2019), no. 3, 529–567.


\bibitem{J}
L. Jiu, D. Y. H. Shi, 
{\it Moments and cumulants on identities for Bernoulli and Euler numbers}, 
Math. Rep. (Bucur.)  {\bf 24(74)} (2022), no. 4, 643–650.

\bibitem{WJ}
W. P. Johnson,
The curious history of Fa\`a di Bruno's formula,
Amer. Math. Monthly {\bf 109} (2002), no. 3, 217--234.

\bibitem{K}

K. Kamano, {\it The multiple Hurwitz zeta function and a generalisation of Lerch's formula}, Tokyo J. Math. {\bf 29} (2006), 61–71.




\bibitem{MMT}
K. Matsumoto, T. Matsusaka, I. Tanackov,
{\it On the behaviour of multiple zeta-functions with identical arguments on the real line}, 
J. Number Theory {\bf 239} (2022), 151–182.

\bibitem{TM}
T. Matsusaka, 
Applications of Fa\`a di Bruno's formula to partition traces,
Res. Number Theory {\bf 11} (2025), no. 3, Paper No. 69, 11 pp.
 
\bibitem{MM}
M. Merca, {\it The Riemann Zeta Function With Even Arguments as Sums Over Integer Partitions}, The Amer. Math. Monthly, {\bf 124} (2017), no. 6, 554–557. 

\bibitem{M}
S. Muneta
{\it On some explicit evaluations of multiple zeta-star values},
J. Number Theory, {\bf 128} (2008), no. 9, pp. 2538–2548.


\end{thebibliography}
\end{document}